\newtheorem{lemma}{Lemma}
\newtheorem{theorem}{Theorem}
\newtheorem{remark}{Remark}
\title{On linear independence measures of the values of Mahler functions}
\author{Keijo V\"a\"an\"anen \and Wen Wu}
\date{\today}
\begin{document}
\maketitle
\begin{abstract}
In this paper, we estimate the linear independence measures for the values of a class Mahler functions of degree one and two. For the purpose, we study the determinants of suitable Hermite-Pad\'{e} approximation polynomials. Based on the non-vanishing of these determinants, we apply the functional equations to get an infinite sequence of approximations which is used to produce the linear independence measures.
\end{abstract}

\section{Introduction and results}
In the present work our aim is to obtain linear independence measures for the values of a class of Mahler functions $F(z), G(z)\in\mathbb{Q}[[z]]$ converging on some open disc $D_{r}:=\{z:|z|<r\leq 1\}$ and satisfying a system of Mahler type functional equations
\begin{equation}\label{eq:Mahler type}
\left\{
\begin{aligned}
F(z^d) & = p_{11}(z)F(z) + p_{12}(z)G(z) + p_{10}(z),\\
G(z^d) & = p_{21}(z)F(z) + p_{22}(z)G(z) + p_{20}(z)
\end{aligned}
\right.
\end{equation}
with $p_{ij}(z) \in \mathbb{Q}(z)$ satisfying $p_{11}(z)p_{22}(z) - p_{12}(z)p_{21}(z) \neq 0.$ Note that Mahler functions of degree one or two satisfy functional equations of the above type, if $F(z)$ and $G(z)$ are Mahler functions of degree one, then $p_{12}(z) = p_{21}(z) = 0$, and if $F(z)$ is of degree two, then we choose $G(z) = F(z^d)$. Our general result (Theorem 6 in Sec. 4) needs some technical notations to be presented later, and therefore to introduce our results we give here applications to some well-known  functions.

The linear independence measures studied here are lower bounds for linear forms (in $1$ and certain numbers $\gamma_{1}$ and $\gamma_{2}$) of the form 
\begin{equation}
|h_{0}+h_{1}\gamma_{1}+h_{2}\gamma_{2}|>CH^{-\mu} \label{e:lower}
\end{equation}
valid for any integers $h_{0}$, $h_{1}$, $h_{2}$, not all zero, where the exponent $\mu$ is given explicitly, $ H=\max\{|h_{1}|,|h_{2}|, H_{0}\}$, and positive constants $C$ and $H_{0}$ are independent of $h_{i}$. In our results $\gamma_{1}$ and $\gamma_{2}$ are the values of the functions under consideration at rational points $a/b\in D_{r}\backslash \{0\}$, where $\log |a| = \lambda \log b~ (0\leq \lambda <\log (rb)/\log b)$. We note that generally \cite[Theorem  4.4.1]{Nishioka} implies the existence of a $\mu ~ (\geq 2)$ in our cases below, and here our aim is to obtain an explicit upper bound for the linear independence exponent
\[\mu(\gamma_{1},\gamma_{2}):=\inf\{\mu:(\ref{e:lower}) \text{ holds for some } C>0, H_{0}>0\}.\] 

This work is a continuation to \cite{VM2015}, where we studied simultaneous approximations of similar numbers $\gamma_{1}$ and $\gamma_{2}$. We also note that, after Bugeaud's remarkable work \cite{Bugeaud2011} on Thue-Morse numbers, there has appeared several works on the irrationality exponents of the values of degree one Mahler functions, see \cite{AR2009, BHWY2015, Coons2013, GWW2014, Keijo2015, WW2014} and the references in \cite{BHWY2015}. In particular, the irrationality exponents of the numbers in Theorem 1-3 below equal $2$.

\subsection{Thue-Morse number and its square}
Our first result studies the product 
\begin{align*}
T(z)=\prod_{j=0}^{\infty}(1-z^{2^{j}}),
\end{align*}
the generating function of the Thue-Morse sequence on $\{-1,1\}$, satisfying 
\begin{align}
T(z)=(1-z)T(z^{2}). \label{Thue}
\end{align}
\begin{theorem}\label{thm:1}
We have \[\mu\left(T\left(\frac{1}{b}\right),T^{2}\left(\frac{1}{b}\right)\right)\leq \frac{91}{32}\approx 2.843\dots.\] More generally, if $0\leq \lambda <7/29$, then 
\begin{align*}
\mu\left(T\left(\frac{a}{b}\right),T^{2}\left(\frac{a}{b}\right)\right)\leq \frac{91}{32-104\lambda}.
\end{align*}
\end{theorem}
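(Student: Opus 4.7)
The plan is to cast the problem in the framework \eqref{eq:Mahler type} with $d=2$, $F=T$, $G=T^{2}$, so that \eqref{Thue} becomes the diagonal Mahler system
\[
F(z^{2})=\tfrac{1}{1-z}F(z),\qquad G(z^{2})=\tfrac{1}{(1-z)^{2}}G(z),
\]
which has $p_{12}=p_{21}=p_{10}=p_{20}=0$ and $p_{11}p_{22}\not\equiv 0$. The theorem will then be extracted from the general Theorem~6 by choosing the parameters optimally.

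First I would apply Siegel's lemma to produce polynomials $A_{0},A_{1},A_{2}\in\mathbb{Z}[z]$ of degree $\le n$, not all zero, with coefficients bounded polynomially in $n$, such that the remainder
\[
E(z):=A_{0}(z)+A_{1}(z)T(z)+A_{2}(z)T^{2}(z)
\]
vanishes at $z=0$ to the maximal order $m=3n+2$ permitted by the count of $3(n+1)$ parameters against $m$ conditions. Iterating $z\mapsto z^{2^{k}}$ and using $T(z^{2^{k}})=T(z)/P_{k}(z)$ with $P_{k}(z):=\prod_{j=0}^{k-1}(1-z^{2^{j}})$, multiplication by $P_{k}(z)^{2}$ produces the polynomial combination
\[
R_{k}(z)=A_{0}(z^{2^{k}})P_{k}(z)^{2}+A_{1}(z^{2^{k}})P_{k}(z)T(z)+A_{2}(z^{2^{k}})T^{2}(z),
\]
still of order $\ge m\cdot 2^{k}$ at the origin. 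Evaluating at $z=a/b$ and multiplying by $b^{N_{k}}$ with $N_{k}=n\cdot 2^{k}+2(2^{k}-1)$ the largest $z$-degree, I obtain an integer linear form $\Lambda_{k}=h_{0}^{(k)}+h_{1}^{(k)}T(a/b)+h_{2}^{(k)}T^{2}(a/b)$ with $\max_{i}|h_{i}^{(k)}|\ll b^{N_{k}}$ and $|\Lambda_{k}|\ll b^{\,N_{k}-m(1-\lambda)2^{k}}$, which decays to zero provided $\lambda<2n/(3n+2)$.

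The principal obstacle is to show that the $3\times 3$ determinant $D_{k}$ built from the coefficient vectors of three consecutive forms $\Lambda_{k},\Lambda_{k+1},\Lambda_{k+2}$ is non-zero for all large $k$ --- this is precisely the hypothesis that Theorem~6 requires in order to extract three linearly independent $\mathbb{Z}$-approximations from a single Padé family. I would follow the standard Mahler-theoretic trick: verify that the polynomial determinant underlying $D_{k}$ transforms multiplicatively under the substitution $z\mapsto z^{2}$, so that non-vanishing at level $k$ reduces inductively to non-vanishing of a base determinant $D_{0}(z)$ together with non-vanishing of explicit Mahler factors $\prod(1-z^{2^{j}})$ at $z=a/b$. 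The latter are automatic since $a/b\in D_{r}\setminus\{0\}$ with $r\leq 1$, and the former is a finite combinatorial check on the Padé data specific to the pair $(T,T^{2})$, made tractable by the simple shape of \eqref{Thue}.

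Once non-vanishing of $D_{k}$ is established, Theorem~6 applies and returns a linear independence measure as an explicit rational function of the degree parameter(s) and $\lambda$. Optimising over the admissible integer degrees of $A_{0},A_{1},A_{2}$ (which one may take unequal to minimise $N_{k}$ against the decay rate $m(1-\lambda)2^{k}$), subject to the decay constraint, singles out the choice producing the bound $91/(32-104\lambda)$, valid precisely on $\lambda<7/29$, which specialises to $91/32$ when $a=1$. The calibration of sizes and the optimization are routine; the genuine technical step is the combinatorial verification that the base determinant $D_{0}$ for the Thue--Morse Padé system does not vanish identically.
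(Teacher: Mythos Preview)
Your proposal has a structural gap in the construction of the three linear forms. You build $\Lambda_{k},\Lambda_{k+1},\Lambda_{k+2}$ by iterating a \emph{single} Pad\'e triple $(A_{0},A_{1},A_{2})$ at three consecutive Mahler levels $k,k+1,k+2$. But then the heights scale like $b^{(n+2)2^{k}},\,b^{(n+2)2^{k+1}},\,b^{(n+2)2^{k+2}}$ while the remainders decay like $b^{-2n\cdot 2^{k}},\,b^{-2n\cdot 2^{k+1}},\,b^{-2n\cdot 2^{k+2}}$ (for $\lambda=0$). In the language of Lemma~\ref{lem:1}, the crucial quantity $\nu=\min_{i\neq j}\{V(i)-E(j)\}$ satisfies, e.g., $V(0)-E(1)=2n-2(n+2)=-4<0$. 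So the ``$H\cdot(\text{remainder})$'' term in the determinant estimate \eqref{10} never becomes small, and Theorem~\ref{thm:5} cannot be invoked. Three consecutive \emph{iteration} levels are too far apart in size for the method to bite.

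What the paper actually does is different in two essential ways. First, the three rows of the determinant come from three \emph{different} Hermite--Pad\'e problems at degree parameters $k,k+1,k+2$, each iterated the \emph{same} number of times $m$; this keeps the $E$-values within $O(1)$ of each other and makes $\nu(\ell)=k-2-\lambda(3k+2)>0$. Second, a single such block would only yield $\mu\le 2\theta(1)/\nu(1)\to 4$ as $k\to\infty$; the bound $91/(32-104\lambda)$ arises because the paper uses $L=5$ blocks at the specific degree parameters $k=29,31,34,43,49$, chosen so that the $\nu(\ell)$ densify the range $(\nu(1),2\nu(1))$ and so that the base determinants $D(\underline{k}(\ell),z)$ are nonzero. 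That last point is not a routine check: as the paper remarks, for the pair $(T,T^{2})$ most choices of $k$ give $D(\underline{k},z)\equiv 0$, and the five admissible values were found by explicit computation (reproduced in the Appendix). Your sentence ``optimising over the admissible integer degrees \dots\ singles out the choice producing the bound $91/(32-104\lambda)$'' hides precisely the two mechanisms --- multiple Pad\'e families and a computer search for nonvanishing determinants --- that produce those specific constants.
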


\subsection{Stern's sequence and its twisted version}
Next, let $A(z)$ and $B(z)$ be generating functions of Stern's diatomic sequence and its twisted version. These functions satisfy functional equations 
\begin{align}
A(z)=(1+z+z^{2})A(z^{2}), \qquad
B(z)=2-(1+z+z^{2})B(z^{2}), \label{Stern}
\end{align}
of type (I), see e.g. \cite{BV2013}.
\begin{theorem}\label{thm:2}
We have \[\mu\left(A\left(\frac{1}{b}\right), B\left(\frac{1}{b}\right)\right)\leq \frac{26}{9}\approx 2.888\dots.\]
More generally,  
\begin{equation*}
\mu\left(A\left(\frac{a}{b}\right), B\left(\frac{a}{b}\right)\right)\leq \left\{
\begin{aligned}
\frac{130}{45-149\lambda}, &\quad \textrm{ if }\lambda<\frac{145}{1289},\\
\frac{69}{25-89\lambda}, &\quad \textrm{ if }\frac{145}{1289}\leq \lambda <\frac{5}{29}.\\
\end{aligned}\right. 
\end{equation*}
\end{theorem}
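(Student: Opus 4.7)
The plan is to specialise Theorem 6 to the pair $F=A$, $G=B$. By \eqref{Stern}, this pair satisfies \eqref{eq:Mahler type} with $d=2$, $p_{11}(z)=1+z+z^{2}=-p_{22}(z)$, $p_{12}=p_{21}=0$, $p_{10}=0$, $p_{20}=2$, and the non-degeneracy condition $p_{11}p_{22}-p_{12}p_{21}=-(1+z+z^{2})^{2}$ is evident. To invoke Theorem 6 I must supply three ingredients: (i) a family of Hermite--Pad\'e approximation polynomials of controlled degree and vanishing order; (ii) the non-vanishing of the associated $3\times 3$ determinant; and (iii) the optimal choice of integer parameters.

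First I would fix non-negative integers $m_{0},m_{1},m_{2}$ and, by Siegel-style parameter counting, construct three $\mathbb{Q}$-linearly independent polynomial triples $(P_{0}^{(j)},P_{1}^{(j)},P_{2}^{(j)})$, $j=1,2,3$, with $\deg P_{i}^{(j)}\le m_{i}$, such that each linear form
\[
R^{(j)}(z)=P_{0}^{(j)}(z)+P_{1}^{(j)}(z)A(z)+P_{2}^{(j)}(z)B(z)
\]
vanishes at $z=0$ to order at least $N$, with $N$ taken as large as the count $m_{0}+m_{1}+m_{2}+2$ permits. Mere existence is routine linear algebra; what is more delicate is verifying that the matrix $\Delta(z):=\det\bigl[P_{i}^{(j)}(z)\bigr]_{0\le i,j\le 2}$ is not identically zero for the parameter choices that end up being optimal.

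The non-vanishing of $\Delta$ is the main technical step, and I would treat it in the spirit of \cite{VM2015}: evaluate $\Delta$ (or its leading non-vanishing Taylor coefficient) at $z=0$, using $A(0)=B(0)=1$, the known initial Taylor coefficients of $A$ and $B$, and the arithmetic structure of Stern's sequence to force a non-zero value. This is where most of the casework sits, and where the admissible parameter vectors become pinned down.

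Once $\Delta\not\equiv 0$, the iteration is the standard Mahler step: substituting $z\mapsto z^{2^{n}}$ and inverting \eqref{Stern} expresses $A(z^{2^{n}})$ and $B(z^{2^{n}})$ as rational combinations of $A(z),B(z),1$ with denominator $\prod_{k=0}^{n-1}(1+z^{2^{k}}+z^{2^{k+1}})$. Specialising at $z=a/b$ and clearing denominators yields, for each $j$, an integer triple of controlled height together with a linear form in $1,A(a/b),B(a/b)$ of size at most $c\,|a/b|^{N\cdot 2^{n}}$. Feeding these approximations and the non-vanishing determinant into the usual three-term linear independence measure lemma produces an estimate of shape $\mu\le\alpha/(\beta-\gamma\lambda)$, with $\alpha,\beta,\gamma$ explicit in $(m_{0},m_{1},m_{2},N)$. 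Optimising over the finitely many admissible parameter vectors reveals two competing optima, one producing $130/(45-149\lambda)$ and one producing $69/(25-89\lambda)$; the two formulas agree at $\lambda=145/1289$ and together cover the stated range $0\le\lambda<5/29$, giving $\mu\le 26/9$ at $\lambda=0$. The main obstacle throughout is the verification $\Delta\not\equiv 0$ for both optimal parameter sets, which demands explicit computation but no new ideas.
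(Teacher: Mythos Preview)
Your plan is the paper's own route: specialise Theorem~\ref{thm:5} to $F=A$, $G=B$, build Hermite--Pad\'e forms, iterate via \eqref{Stern}, and read off $\mu$ from Lemma~\ref{lem:1}. Two small corrections: from \eqref{Stern} one has $p_{11}=1/(1+z+z^{2})=-p_{22}$, not $p_{11}=1+z+z^{2}$; after clearing the common denominator $P=1+z+z^{2}$ this gives $P_{11}=1$, $P_{22}=-1$, whence $\widetilde P(z)=1$ and $\delta_{1}=\delta_{2}=0$. The paper uses $(k,k+1,k-1)$ approximations (so $\bar e(k)=k+1$, $\tau=1$) and forms $\underline{k}(\ell)=(k,k+1,k+2)$.

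There is, however, a genuine gap in your sketch. You speak of ``two competing optima'' and of checking $\Delta\not\equiv 0$ ``for both optimal parameter sets'', but Lemma~\ref{lem:1} requires an entire chain $\underline{k}(1),\dots,\underline{k}(L)$ with $0<\nu(1)<\cdots<\nu(L)<2\nu(1)$, and \emph{all} of them enter the bound through $\mu(\ell)=\theta(\ell+1)/\nu(\ell)$. If you kept only the two extreme vectors (say $k=29$ and $k=49$), the condition $\nu(2)<2\nu(1)$ would still hold, but $\mu(1)=\theta(2)/\nu(1)$ would become $105/(25-89\lambda)$, giving $\mu\le 4.2$ at $\lambda=0$ rather than $26/9$. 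The sharp constants arise precisely because the paper inserts intermediate vectors $k\in\{29,31,34,38,43,49\}$ ($L=6$), so that each $\theta(\ell+1)$ is close to $\theta(\ell)$; the two formulas in the statement are then $\mu(6)=130/(45-149\lambda)$ and $\mu(1)=69/(25-89\lambda)$, the latter using $\theta(2)=2\cdot 31+7=69$. All six determinants $D(\underline{k}(\ell),z)$ must be verified nonzero, and the paper does this by direct machine computation of the polynomials (tabulated in the Appendix), not via any structural property of the Stern sequence; indeed Remark~1 notes that for this particular pair $A,B$ every such determinant with $1\le k\le 50$ happens to be nonzero, which is what makes the choice of the six $k$'s so flexible here.
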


\subsection{Lambert series $G_{3}(z)$ and $F_{3}(z)$}
The functions 
\begin{align*}
G_{3}(z)=\sum_{j=0}^{\infty}\frac{z^{3^{j}}}{1-z^{3^{j}}}, \qquad F_{3}(z)=\sum_{j=0}^{\infty}\frac{z^{3^{j}}}{1+z^{3^{j}}}=-G_{3}(-z)
\end{align*}
satisfy
\begin{align}
(1-z)G_{3}(z)-(1-z)G_{3}(z^{3})-z=0,\qquad
(1+z)F_{3}(z)-(1+z)F_{3}(z^{3})-z=0. \label{G3F3}
\end{align}
The following result studies the values of these typical examples of Mahler functions.
\begin{theorem}\label{thm:3}
We have \[\mu\left(G_{3}\left(\frac{1}{b}\right), F_{3}\left(\frac{1}{b}\right)\right)\leq \frac{129}{37}\approx 3.486\dots.\]
More generally,  
\begin{equation*}
\mu\left(G_{3}\left(\frac{a}{b}\right), F_{3}\left(\frac{a}{b}\right)\right)\leq\left\{
\begin{aligned}
\frac{129}{37-119\lambda},& \quad \textrm{ if } \lambda < \frac{25}{443},\\
\frac{83}{24-80\lambda}, &\quad \textrm{ if } \frac{25}{443}\leq \lambda < \frac{43}{337},\\ 
\frac{57}{17-59\lambda}, &\quad \textrm{ if } \frac{43}{337}\leq \lambda < \frac{7}{29}.
\end{aligned}
\right.
\end{equation*}
\end{theorem}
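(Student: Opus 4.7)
The plan is to apply the general framework of Theorem 6 (stated later in Section 4) to the specific pair $(F,G)=(G_3,F_3)$ with $d=3$. Reading off the functional equations (5), the system (1) is realized with $p_{11}=p_{22}=1$, $p_{12}=p_{21}=0$, $p_{10}(z)=-z/(1-z)$, $p_{20}(z)=-z/(1+z)$, so the non-degeneracy condition $p_{11}p_{22}-p_{12}p_{21}=1\neq 0$ is satisfied. Since the functional system is diagonal (both $G_3$ and $F_3$ are degree-one Mahler functions), the calculations will be more tractable than in the fully-coupled degree-two case.

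The first step is to construct Hermite--Pad\'e polynomials $P_0(z),P_1(z),P_2(z)\in\mathbb{Z}[z]$, with prescribed degrees, so that
\[
E(z):=P_0(z)+P_1(z)G_3(z)+P_2(z)F_3(z)
\]
vanishes at $z=0$ to as high an order $N$ as possible. Since $F_3(z)=-G_3(-z)$, its Taylor coefficients are linked to those of $G_3$, which will simplify the linear system defining the $P_i$. One then iterates by repeatedly substituting $z\mapsto z^3$ and using (5) to reduce $G_3(z^{3^k})$ and $F_3(z^{3^k})$ back to linear combinations of $1,G_3(z),F_3(z)$ with coefficients in $\mathbb{Q}(z)$. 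Clearing the denominators $\prod_{j<k}(1-z^{3^j})(1+z^{3^j})$ produces an infinite sequence of approximating linear forms $E_k(z)$ whose coefficient polynomials have degrees linear in $3^k$ and whose remainders decay like $|z|^{N\cdot 3^k}$.

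The main technical obstacle, as always in this method, is to verify that the approximations so produced are sufficiently independent: one needs a non-vanishing $3\times 3$ determinant built from three consecutive triples $(P_{0,k},P_{1,k},P_{2,k})$. This is precisely the hypothesis whose verification Theorem 6 shifts to a finite check on the initial Pad\'e construction; the diagonal structure of the Mahler system then propagates non-degeneracy to all levels $k$. The remaining bookkeeping, namely tracking the denominator at $z=a/b$ coming from $\prod_{j}(b^{3^j}-a^{3^j})(b^{3^j}+a^{3^j})$ and comparing it against $|E_k(a/b)|$, is lengthy but routine once the initial data are in hand.

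Finally, to obtain the stated linear independence measure, one optimizes the parameters of the Hermite--Pad\'e construction (essentially, the relative degrees assigned to $P_0,P_1,P_2$ and the order of vanishing $N$) as functions of $\lambda=\log|a|/\log b$. The three pieces of the piecewise bound correspond to three distinct optimal choices; the breakpoints $\lambda=25/443$ and $\lambda=43/337$ are exactly the values at which the optimum changes. Substituting the resulting numerical data into the formula of Theorem 6 yields the bounds $129/(37-119\lambda)$, $83/(24-80\lambda)$ and $57/(17-59\lambda)$ in the three regimes, and in particular $\mu(G_3(1/b),F_3(1/b))\le 129/37$ at $\lambda=0$.
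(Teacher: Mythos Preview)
Your outline follows the paper's general framework (Theorem~6 / Lemma~1), but it misses the concrete content that actually constitutes the proof, and it misdescribes the origin of the piecewise bound.

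First, the mechanism producing the three regimes is not ``optimize the Pad\'e parameters as a function of $\lambda$.'' In Lemma~1 one must fix \emph{in advance} several triples $\underline{k}(1),\ldots,\underline{k}(L)$ so that the associated quantities satisfy $0<\nu(1)<\cdots<\nu(L)<d\,\nu(1)$; the bound is then $\mu=\max_\ell \theta(\ell+1)/\nu(\ell)$, and the three pieces of the theorem arise because, as $\lambda$ varies, the index $\ell$ achieving this maximum changes. All $L$ triples are needed simultaneously for every $\lambda$; one cannot simply pick a single optimal construction per $\lambda$. In the paper's proof $L=3$ with $d=3$, using $(k,k,k)$ Hermite--Pad\'e approximations and $\underline{k}(\ell)=(k,k+1,k+2)$ for $k=19,26,39$; this gives $\theta(\ell)=2k+5$, $\nu(\ell)=k-2-\lambda(3k+2)$, whence $\mu(3)=129/(37-119\lambda)$, $\mu(2)=83/(24-80\lambda)$, $\mu(1)=57/(17-59\lambda)$, matching the statement.

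Second, the substance of the proof is the verification that the determinants $D(\underline{k}(\ell),z)\neq 0$ for these specific $k$. This is far from automatic: the paper observes (Remark~1) that for the other functions treated, \emph{most} choices of $k$ give a vanishing determinant, so one must exhibit working values and display the explicit polynomials (done in the Appendix). Your sketch treats this as a routine check propagated by the diagonal structure, but locating admissible $k$ with $o(k)=3k+2$ and nonzero determinant is the actual work, and without it none of the numerical exponents can be derived. You also need the parameter identifications $P(z)=1-z^2$, $\widetilde P(z)=1+z^2$, $\delta=\delta_1=\delta_2=0$, $\tau=2$, $\bar e(k)=k$ to feed into (16); these are absent from your write-up.
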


\subsection{The Rudin-Shapiro sequence}
Let $(r_{n})_{n\geq 0}$ be the Rudin-Shapiro sequence defined by \(r_0 = 1, r_{2n} = r_n, r_{2n+1} = (-1)^nr_n.\)
Its generating function $R(z)=\sum_{n\geq 0}r_{n}z^{n}$ satisfies
\begin{equation}\label{e:rudin}
R(z) = R(z^2) + zR(-z^2).
\end{equation}
We shall investigate the values of $R(z)$ and $R(-z)$ at some rational points.
\begin{theorem}\label{thm:rudin}
We have \[\mu\left(R\left(\frac{1}{b}\right), R\left(-\frac{1}{b}\right)\right)\leq \frac{13}{4}=3.25.\]
More generally,  
\begin{equation*}
\mu\left(R\left(\frac{a}{b}\right), R\left(-\frac{a}{b}\right)\right)\leq\left\{
\begin{aligned}
\frac{39}{12-40\lambda},& \quad \textrm{ if }~ \lambda < \frac{21}{187},\\
\frac{47}{15-53\lambda}, &\quad \textrm{ if }~ \frac{21}{187}\leq \lambda < \frac{3}{13}.
\end{aligned}
\right.
\end{equation*}
\end{theorem}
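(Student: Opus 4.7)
The plan is to fit $R(z)$ and $R(-z)$ into the Mahler system (\ref{eq:Mahler type}) and then apply the general Theorem~6. Setting $F(z):=R(z)$ and $G(z):=R(-z)$, the substitution $z\mapsto -z$ in (\ref{e:rudin}) yields $R(-z)=R(z^{2})-zR(-z^{2})$, so $F(z)+G(z)=2R(z^{2})$ and $F(z)-G(z)=2zR(-z^{2})$. Solving for $R(z^{2})$ and $R(-z^{2})$ gives the system $F(z^{2})=\tfrac{1}{2}F(z)+\tfrac{1}{2}G(z)$ and $G(z^{2})=\tfrac{1}{2z}F(z)-\tfrac{1}{2z}G(z)$, which is of type (\ref{eq:Mahler type}) with $d=2$, $p_{10}=p_{20}=0$, and non-vanishing determinant $p_{11}p_{22}-p_{12}p_{21}=-1/(2z)$. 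Thus Theorem~6 is in principle applicable, provided the Hermite--Pad\'e hypotheses can be verified.

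Next, for a parameter $n$ to be optimised, I would construct two linearly independent Hermite--Pad\'e forms $A_{i}(z)F(z)+B_{i}(z)G(z)-C_{i}(z)=O(z^{m_{i}})$ $(i=1,2)$ with $\deg A_{i},\deg B_{i},\deg C_{i}\le n$ and $m_{i}$ as close as possible to the extremal order (roughly $\tfrac{3}{2}n$ on average). Iterating these via the functional system then furnishes, at $z=a/b$, a sequence of integer linear forms $h_{0}^{(k)}+h_{1}^{(k)}F(a/b)+h_{2}^{(k)}G(a/b)$ whose heights and moduli can be tracked explicitly; this is exactly the data Theorem~6 converts into the linear independence exponent.

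The main obstacle, as in Theorems~\ref{thm:1}--\ref{thm:3}, is to prove that the Pad\'e determinant $\Delta(z):=A_{1}(z)B_{2}(z)-A_{2}(z)B_{1}(z)$ does not vanish identically. The involution $z\mapsto -z$ swaps $F$ and $G$ and permutes the Pad\'e forms, which simultaneously constrains and helps the analysis: splitting the form space by parity reduces the question to rank conditions on the Taylor coefficients of $F$, i.e.\ on the Rudin--Shapiro sequence itself. I would verify $\Delta\not\equiv 0$ by computing its leading Taylor coefficient explicitly for small $n$, exploiting $|r_{n}|=1$, and then transport the non-vanishing to arbitrary scale via the functional equation: since the coefficient determinant $-1/(2z)$ is non-zero, the iteration preserves linear independence of successive Pad\'e forms.

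Once $\Delta\not\equiv 0$ is established, the remainder is bookkeeping in the framework of Theorem~6: collect the degrees, approximation orders, and common denominators, substitute into the general formula, and optimise over the free parameter controlling the ratio $m_{1}/m_{2}$. The two competing optima produce the piecewise bound, with the break exactly at $\lambda=21/187$ where the two admissible choices of Pad\'e shape contribute equally. Specialising to $a=1$, i.e.\ $\lambda=0$, gives $\mu\le 39/12=13/4$, matching the statement.
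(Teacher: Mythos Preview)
Your derivation of the Mahler system for $F=R$ and $G(z)=R(-z)$ is correct and is exactly how the paper begins. After that point, however, the proposal diverges from what is actually required and contains gaps that would prevent the argument from closing.

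\textbf{Two forms are not enough.} To bound $|h_0+h_1\gamma_1+h_2\gamma_2|$ from below via the determinant trick one needs, at each scale $m$, \emph{three} approximation forms whose $3\times 3$ coefficient determinant is nonzero; then for every nonzero $(h_0,h_1,h_2)$ some pair of them, together with the row $(h_1,h_2,h_0)$, gives a nonzero integer determinant. With only two forms the exceptional set $\{(h_0,h_1,h_2):\det=0\}$ is a plane, not $\{0\}$, and the argument breaks for all $(h_0,h_1,h_2)$ in that plane. Accordingly the paper works with triples $\underline{k}(\ell)=(k,k{+}1,k{+}2)$, and the object whose non-vanishing matters is the $3\times 3$ determinant $\Delta(\underline{k},0,z)$ of Section~2, not your $2\times2$ minor $A_1B_2-A_2B_1$.

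\textbf{The order estimate is off.} ``Roughly $\tfrac32 n$'' is the count for simultaneous (type~I) Pad\'e approximation, not for the Hermite--Pad\'e linear forms you wrote down. With $\deg A,\deg B,\deg C\le k$ one forces $\operatorname{ord}(AF+BG+C)\ge 3k+2$, and indeed the paper has $o(k)=3k+2$ for the $(k,k,k)$ approximations used here.

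\textbf{Non-vanishing is by explicit computation, not by parity.} The paper fixes the concrete values $k=17,21,26$, computes $A_k,B_k,C_k,R_k$ explicitly (Appendix), and reads off $\Delta(\underline{k},0,z)=-512z^{53},\ 3072z^{65},\ -4096z^{80}$ respectively. No general criterion is available; selecting workable $k$ is part of the proof. The ``transport to arbitrary scale via the functional equation'' you mention is exactly equation~(\ref{5}), but it presupposes the initial non-vanishing that must be checked by hand.

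\textbf{The piecewise bound.} It does not arise from ``two admissible Pad\'e shapes'' or a continuous optimisation, but from the discrete comparison of $\mu(\ell)=\theta(\ell{+}1)/\nu(\ell)$ over $\ell=1,2,3$ in Lemma~\ref{lem:1}. With $\theta(\ell)=2k+5+2\delta_1$ and $\nu(\ell)=k-2-\delta_1-(3k+2)\lambda$ at $k=17,21,26$, one finds $\mu(3)=\theta(4)/\nu(3)$ dominates for $\lambda<21/187$ and $\mu(1)=\theta(2)/\nu(1)$ dominates for $21/187\le\lambda<3/13$, and then lets $\delta_1\to 0$. Note also that here $\delta=1$ and $(1+\delta)|\widetilde P(0)|=2>1$, so condition~(\ref{e1}) fails and the small $\delta_1>0$ must be carried through the estimates; your outline omits this point.
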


\subsection{A degree $2$ Mahler function}
As an example of degree $2$ Mahler functions we take the function $S(z)$ satisfying $S(0)=1$ and 
\begin{align}
zS(z)-(1+z+z^{2})S(z^{4})+S(z^{16})=0. \label{degree2}
\end{align}
This function was introduced by Dilcher and Stolarsky \cite{DS2009}, and it has been studied recently in several works , see e.g. \cite{Adam2010}, \cite{BCZ} and \cite{BV2014}, in particular the algebraic independence of $S(\alpha)$, $S^{\prime}(\alpha)$, $S(\alpha^{4})$ and  $S^{\prime}(\alpha^{4})$ is proved in \cite{BCZ} for all algebraic $\alpha$, $0<|\alpha|<1$. Note also that in \cite{BV2014} an upper bounded $5$ is obtained for the irrationality exponent of $S(1/b)$.
\begin{theorem}\label{thm:4}
We have \[\mu\left(S\left(\frac{1}{b}\right), S\left(\frac{1}{b^{4}}\right)\right)\leq \frac{167}{25}= 6.68.\]
More generally, if $0\leq \lambda < 1/5$, then 
\begin{align*}
\mu\left(S\left(\frac{a}{b}\right), S\left(\left(\frac{a}{b}\right)^{4}\right)\right)\leq \frac{167}{25-93\lambda}.
\end{align*}
\end{theorem}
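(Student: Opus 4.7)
The plan is to derive Theorem~\ref{thm:4} as a direct application of the general criterion (Theorem~6) of Section~4, specialized to the pair $F(z)=S(z)$, $G(z)=S(z^{4})$ with $d=4$. Rewriting (\ref{degree2}) as $S(z^{16})=(1+z+z^{2})S(z^{4})-zS(z)$, the system (\ref{eq:Mahler type}) is satisfied with $p_{11}=0$, $p_{12}=1$, $p_{10}=0$, $p_{21}=-z$, $p_{22}=1+z+z^{2}$, $p_{20}=0$, so that $p_{11}p_{22}-p_{12}p_{21}=z\not\equiv 0$, and the basic non-degeneracy hypothesis holds. The core analytic input is then to build two independent Hermite-Pad\'{e} approximations of the triple $1,F(z),G(z)$: linear forms
\begin{equation*}
L_{i}(z)=P_{i,0}(z)+P_{i,1}(z)S(z)+P_{i,2}(z)S(z^{4}),\qquad i=1,2,
\end{equation*}
with $P_{i,j}\in\mathbb{Z}[z]$ of controlled degree and $\mathrm{ord}_{z=0}L_{i}$ as large as the parameter count allows. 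Solving the corresponding linear system in the unknown coefficients over the Taylor expansion of $S$ produces such $L_{i}$; the free parameters (degrees $m_{i}$ and vanishing orders $n_{i}$) are tuned so that the final estimate reads $167/25$, mirroring the calculation of the irrationality exponent of $S(1/b)$ in \cite{BV2014} and the simultaneous-approximation version in \cite{VM2015}.

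The hard part will be to verify the non-vanishing of the determinant
\begin{equation*}
\Delta(z)=\det\begin{pmatrix} 1 & 0 & 0 \\ P_{1,0}(z) & P_{1,1}(z) & P_{1,2}(z) \\ P_{2,0}(z) & P_{2,1}(z) & P_{2,2}(z) \end{pmatrix}=P_{1,1}(z)P_{2,2}(z)-P_{1,2}(z)P_{2,1}(z).
\end{equation*}
This is precisely the hypothesis of Theorem~6 that turns $L_{1},L_{2}$, after substitution $z\mapsto z^{4^{k}}$ and reduction via the functional equation, into an infinite sequence of good linear forms without collapsing. I expect to check $\Delta\not\equiv 0$ either by exhibiting a single non-zero Taylor coefficient at $z=0$ (the most direct route, once the $P_{i,j}$ are pinned down), or, should a structural argument be needed, by invoking the algebraic independence of $S(\alpha),S(\alpha^{4})$ over $\overline{\mathbb{Q}}$ proved in \cite{BCZ}, which forbids any non-trivial $\mathbb{Q}(z)$-linear relation between $L_{1}$ and $L_{2}$ compatible with the prescribed vanishing orders.

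Once $\Delta\not\equiv 0$ is established, Theorem~6 converts $L_{1},L_{2}$ and their iterates into a sequence of linear forms in $1,S(a/b),S((a/b)^{4})$ with integer coefficients whose height grows like $b^{C_{1}\cdot 4^{k}}$ and whose absolute value decays like $b^{-C_{2}(1-\lambda)\cdot 4^{k}}$, where $C_{1}$ and $C_{2}$ are explicit linear combinations of the chosen $m_{i}$ and $n_{i}$. The standard conversion (as in \cite{Nishioka}) of such a sequence into the lower bound (\ref{e:lower}) then yields $\mu\le C_{1}/(C_{2}-(\text{shift})\lambda)$; optimizing the parameters so that the numerator is minimized relative to the denominator produces $167/(25-93\lambda)$ throughout the admissible range $0\le\lambda<1/5$, and setting $\lambda=0$ recovers the claimed $167/25$.
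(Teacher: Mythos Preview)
Your setup of the functional system is correct, and invoking Theorem~6 is the right move. But from that point on the proposal misreads the machinery of Sections~2--4 in a way that would not actually produce the bound.

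The determinant hypothesis of Theorem~6 is not the $2\times 2$ minor $P_{1,1}P_{2,2}-P_{1,2}P_{2,1}$ attached to a pair of forms $L_1,L_2$. For each vector $\underline{k}=(k_1,k_2,k_3)$ one needs \emph{three} Hermite--Pad\'e forms $A_{k_i}F+B_{k_i}G+C_{k_i}=R_{k_i}$, and the required non-vanishing is that of the full $3\times 3$ determinant $\Delta(\underline{k},0,z)=\det(A_{k_i},B_{k_i},C_{k_i})$ (equivalently of $D(\underline{k},z)$ in (\ref{6})). This is what feeds condition~(iii) of Lemma~1: given an arbitrary integer vector $(h_0,h_1,h_2)$, the $3\times 3$ non-vanishing guarantees that some $2\times 2$ minor against $(h_1,h_2,h_0)$ survives. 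A single pair $L_1,L_2$ cannot do this. Moreover, you need not one such triple but several: Lemma~1 requires $0<\nu(1)<\cdots<\nu(L)<d\nu(1)$ so that consecutive iterates $m\mapsto m+1$ are bridged, and with $d=4$ the paper uses $L=2$ triples, namely $\underline{k}(1)=(10,11,12)$ and $\underline{k}(2)=(26,27,28)$, built from $(k,k-1,k)$ approximations. The numbers $167$, $25$, $93$ are not the outcome of an abstract optimization but of these specific choices: with $\bar e(k)=k$, $\tau=1$, $o(k)=3k+1$ one gets $\theta(\ell)=2k+11/3+2\delta_1$ and $\nu(\ell)=k-5/3-\delta_1-(3k+1)\lambda$, and the maximum $\mu(\ell)=\theta(\ell+1)/\nu(\ell)$ is attained at $\ell=1$, giving $(55+2/3)/(25/3-31\lambda)=167/(25-93\lambda)$ as $\delta_1\to 0$. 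Note also that here $(1+\delta)|\widetilde P(0)|=2>1$, so condition~(\ref{e1}) fails and one genuinely needs $\delta_1>0$, passing to the limit only at the end.

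Finally, the non-vanishing of $D(\underline{k},z)$ is checked by direct computation of the polynomials (see the Appendix), not by an appeal to the algebraic independence of $S(\alpha),S(\alpha^4)$: the latter concerns values at algebraic points and says nothing about whether three specific polynomial triples $(A_{k_i},B_{k_i},C_{k_i})$ are $\mathbb{Q}(z)$-linearly independent.
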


All results above are based on non-vanishing of the determinants of suitable Hermite-Pad\'{e} approximation polynomials studied in Section $2$. This non-vanishing is verified here by computing the determinants, but it would be of great interest to find a more general criterion for this. After having some non-zero determinants the functional equations can be used to produce a sufficiently dense infinite sequence of approximations with non-zero determinants. It is well-known that such approximations can be used to produce linear independence measures. Section $3$ contains this consideration, and it is then applied to prove a general result in Section $4$. The proofs of Theorems $1$-$5$ are given in Section $5$.

\section{Important determinants}
We first note that the above system \eqref{eq:Mahler type} can be given in the form
\begin{align}
P(z)F(z^d) & = P_{11}(z)F(z) + P_{12}(z)G(z) + P_{10}(z),\label{1}\\
P(z)G(z^d) & = P_{21}(z)F(z) + P_{22}(z)G(z) + P_{20}(z),\label{2}
\end{align}
where $P(z)$, the least common denominator of $p_{ij}(z)$, and $P_{ij}(z) = P(z)p_{ij}(z)$ belong to $\mathbb{Z}[z]$ and satisfy $P_{11}(z)P_{22}(z) - P_{12}(z)P_{21}(z) \neq 0.$

For an integer $k \geq 1$, let $A_k(z), B_k(z), C_k(z) \in \mathbb{Z}[z]$ denote $(d_1,d_2,d_3) = (d_1(k),d_2(k),d_3(k))$ Hermite-Pad\'{e} approximation polynomials of $F(z), G(z)$ and 1, so
\begin{equation}\label{3}
A_k(z)F(z) + B_k(z)G(z) + C_k(z) = R_k(z),
\end{equation}
where deg $A_k(z) \leq d_1$, deg $B_k(z) \leq d_2$, deg $C_k(z) \leq d_3$ and the order of zero at $z = 0$ of the remainder term $R_k(z)$ satisfies ord $R_k(z) =: o(k) \geq d_1 + d_2 + d_3 + 2$. Clearly such polynomials, where at least one of $A_k(z), B_k(z)$ is not zero, exist.
Substituting in (\ref{3}) $z^d$ for $z$ and applying (\ref{1}) and (\ref{2}), we obtain
\begin{eqnarray*}
 (P_{11}(z)A_{k}(z^{d}) + P_{21}(z)B_k(z^d))F(z) + (P_{12}(z)A_k(z^d) + P_{22}(z)B_{k}(z^{d}))G(z) &&\\
+ P_{10}(z)A_{k}(z^{d}) + P_{20}(z)B_{k}(z^{d})
+ P(z)C_{k}(z^{d}) = P(z)R_{k}(z^{d}).
\end{eqnarray*}
Repeating this procedure $m$ times, we have
\begin{equation}
A_{k,m}(z)F(z)+B_{k,m}(z)G(z)+C_{k,m}(z)=R_{k,m}(z),\ m=0,1,\dots, \label{4}
\end{equation}
where $A_{k,0}(z)=A_{k}(z)$, $B_{k,0}(z)=B_{k}(z)$, $C_{k,0}(z)=C_{k}(z)$, $R_{k,0}(z)=R_{k}(z)$ and, for $m=1,2,\dots$,
\begin{equation}\label{e:resursion}
\left\{
\begin{aligned}
A_{k,m}(z)  = & P_{11}(z)A_{k,m-1}(z^{d}) + P_{21}(z)B_{k,m-1}(z^d),\\
B_{k,m}(z)  = & P_{12}(z)A_{k,m-1}(z^{d}) + P_{22}(z)B_{k,m-1}(z^d),\\
C_{k,m}(z)  =&  P_{10}(z)A_{k,m-1}(z^{d}) + P_{20}(z)B_{k,m-1}(z^d)\\
 & + P(z)C_{k,m-1}(z^{d}),\\
R_{k,m}(z)  = & P(z)R_{k,m-1}(z^{d}).
\end{aligned}
\right.
\end{equation}

We are interested in determinants
\[
\Delta(\underline{k},m,z) := \det\left(\begin{array}{ccc} 
  A_{k_1,m}(z) & B_{k_1,m}(z)& C_{k_1,m}(z) \\
	A_{k_2,m}(z) & B_{k_2,m}(z)& C_{k_2,m}(z) \\
	A_{k_3,m}(z) & B_{k_3,m}(z)& C_{k_3,m}(z) \\
  \end{array}\right),
\] 
where $1 \leq k_1 < k_2 < k_3.$ By the above recursions (\ref{e:resursion})
\[
\Delta(\underline{k},m,z) = \Phi(z)\Delta(\underline{k},m-1,z^d),\quad \Phi(z) := (P_{11}(z)P_{22}(z) - P_{12}(z)P_{21}(z))P(z),
\]
and so
\begin{equation}\label{5}
\Delta(\underline{k},m,z) = \Delta(\underline{k},0,z^{d^m})\prod_{j=0}^{m-1}\Phi(z^{d^j}).
\end{equation}
In particular, for degree one functions we have $\Phi(z) = P_{11}(z)P_{22}(z)P(z)$, and for degree two function $F(z)$ with $G(z) = F(z^d)$ the function $\Phi(z) = P_{21}(z)P^2(z)$.

Let $\bar{d}(k) := \max \{d_1(k),d_2(k),d_3(k)\}$. By our assumption $k_1 < k_2 < k_3$ it is natural to assume that $\bar{d}(k_1) \leq \bar{d}(k_2) \leq \bar{d}(k_3)$ and $o(k_1) \leq o(k_2) \leq o(k_3)$. Since
\[
\Delta(\underline{k},0,z) = \det\left(\begin{array}{ccc} 
  A_{k_1}(z) & B_{k_1}(z)& R_{k_1}(z) \\
	A_{k_2}(z) & B_{k_2}(z)& R_{k_2}(z) \\
	A_{k_3}(z) & B_{k_3}(z)& R_{k_3}(z) \\
  \end{array}\right),
\]
it follows that $o(k_1) \leq$ ord $\Delta(\underline{k},0,z) \leq \deg \Delta(\underline{k},0,z) \leq \bar{d}(k_1) + \bar{d}(k_2) + \bar{d}(k_3)$, if $\Delta(\underline{k},0,z) \neq 0$. Thus in this case
\begin{equation}\label{6}
\Delta(\underline{k},0,z) =: z^{o(k_1)}D(\underline{k},z)
\end{equation} 
with some polynomial $D(\underline{k},z) \neq 0, \deg D(\underline{k},z) \leq \bar{d}(k_1) + \bar{d}(k_2) + \bar{d}(k_3) - o(k_1)$. Further,
if $o(k_1) > \bar{d}(k_1) + \bar{d}(k_2) + \bar{d}(k_3)$, then $\Delta(\underline{k},0,z) = 0$.

We note that the condition $D(\underline{k},z) \neq 0$ gives a strong restriction to $o(k_1)$. For example, if $d_j(k_1) = k, d_j(k_2) = k+1, d_j(k_3) = k+2\ (j = 1,2,3)$, then deg $\Delta(\underline{k},0,z) \leq 3k+3$ and $o(k_1) \geq 3k+2$. Thus the condition $D(\underline{k},z) \neq 0$ is possible only if $3k+2 \leq o(k_1) \leq 3k+3$.

The above means that one determinant $\Delta(\underline{k},0,z) \neq 0$ gives an infinite sequence of determinants $\Delta(\underline{k},m,z) \neq 0, m = 0,1,\ldots .$ When considering the values of the functions at rational points $z = a/b$ we need to know that $\Delta(\underline{k},m,a/b) \neq 0$ at least for all sufficiently large $m$. This condition can be verified in many concrete cases by using (\ref{5}) and (\ref{6}), since deg $D(\underline{k},z)$ is small.

\section{Fundamental lemma}

In this section $\gamma_1$ and $\gamma_2$ denote real numbers and $b \geq 2$ is an integer. Let $\underline{k} = \underline{k}(\ell) = (k_{\ell,1},k_{\ell,2},k_{\ell,3})$ $(\ell = 1,\ldots,L)$ be vectors with positive integer components $k_{\ell,i}$ satisfying $k_{\ell,1}<k_{\ell,2}<k_{\ell,3}$ and $k_{\ell,3}\leq k_{\ell+1,1}\ (\ell = 1,\ldots,L-1), k_{L,3}\leq dk_{1,1}$. Assume that for each $k = k_{\ell,i}$ there exists an integer $m_0(k)$ such that for all $m \geq m_0(k)$ we have linear forms
\[
a_{k,m}\gamma_1 + b_{k,m}\gamma_2 + c_{k,m} = r_{k,m}
\]
with the following properties (i) -- (iii).
\begin{itemize}
\item[(i)] The coefficients $a_{k,m}, b_{k,m},c_{k,m} \in \mathbb{Z}$ and satisfy
\begin{equation}\label{8}
\max \{\left|a_{k,m}\right|,\left|b_{k,m}\right|\} \leq c_1(k)b^{E(k)d^m},
\end{equation}
where $E(k)$ and $c_1(k)$ (as also $c_2(k),\ldots $ later) are positive constants independent of $m$.
\item[(ii)] We have
\begin{equation}\label{9}
\left|r_{k,m}\right| \leq c_2(k)b^{-V(k)d^m},
\end{equation}
where $V(k) > 0$ is independent of $m$.
\item[(iii)] The determinant
\[
\det\left(\begin{array}{ccc} 
  a_{k_{\ell,1},m} & b_{k_{\ell,1},m}& c_{k_{\ell,1},m} \\
	a_{k_{\ell,2},m} & b_{k_{\ell,2},m}& c_{k_{\ell,2},m} \\
	a_{k_{\ell,3},m} & b_{k_{\ell,3},m}& c_{k_{\ell,3},m} \\
\end{array}\right)
\neq 0.
\]
for all $\ell = 1,\ldots,L; m \geq m_0(\underline{k}(\ell)) = \max_{1\leq i\leq 3}\{m_0(k_{\ell,i})\}$.
\end{itemize}

For the following fundamental lemma, we finally define, for all $\ell = 1,\ldots,L$, the notations
\begin{align*}
\theta(\ell) & = \max_{1\leq i<j \leq 3} \{E(k_{\ell,i}) + E(k_{\ell,j})\},\\
\nu(\ell) & = \min_{\substack{ 1\leq i,j \leq 3 \\ i\neq j}} \{V(k_{\ell,i}) - E(k_{\ell,j})\},
\end{align*}
and denote $K := (\underline{k}(1),\ldots,\underline{k}(L))$.

\begin{lemma}\label{lem:1}
Suppose that $0 < \nu(1) <\cdots< \nu(L) < d\nu(1)$. Then there exist positive constants $C = C(K)$ and $H_0 = H_0(K)$ such that for any integers $h_0, h_1, h_2$, not all zero,
\[
\left|h_0 + h_1\gamma_1 + h_2\gamma_2\right| > CH^{-\mu},
\]
where $H = \max \{\left|h_1\right|, \left|h_2\right|, H_0\}$ and
\[
\mu = \max_{1\leq \ell \leq L} \mu(\ell), \quad \mu(\ell) := \frac{\theta(\ell +1)}{\nu(\ell)}, \quad \theta(L+1) := d\theta(1).\\  
\]
\end{lemma}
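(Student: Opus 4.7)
Set $\Lambda := h_0 + h_1\gamma_1 + h_2\gamma_2$ and $H := \max\{|h_1|, |h_2|, H_0\}$, with $H_0$ to be specified. The plan is a standard three-step Siegel-type determinantal argument: first extract a nonzero integer from a $3\times 3$ determinant built out of the given forms and $(h_0,h_1,h_2)$, then convert it via a column operation into an expression linear in $\Lambda$ and the remainders $r_{k,m}$, and finally optimize the choice of $(\ell,m)$ against $H$.

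Fix $\ell\in\{1,\dots,L\}$ and $m\geq m_0(\underline{k}(\ell))$. By (iii) the rows $(c_{k_{\ell,i},m}, a_{k_{\ell,i},m}, b_{k_{\ell,i},m})$ for $i=1,2,3$ are $\mathbb{Q}$-independent, so the nonzero vector $(h_0,h_1,h_2)$ is a nontrivial $\mathbb{Q}$-combination of them. Consequently, for at least one $i\in\{1,2,3\}$ the integer
$$\Theta \;:=\; \det\!\begin{pmatrix} h_0 & h_1 & h_2 \\ c_{k_{\ell,j},m} & a_{k_{\ell,j},m} & b_{k_{\ell,j},m} \\ c_{k_{\ell,s},m} & a_{k_{\ell,s},m} & b_{k_{\ell,s},m} \end{pmatrix} \qquad (\{i,j,s\}=\{1,2,3\})$$
is nonzero, hence $|\Theta|\geq 1$. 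The column operation $C_1 \mapsto C_1 + \gamma_1 C_2 + \gamma_2 C_3$ preserves $\Theta$ and turns the first column into $(\Lambda, r_{k_{\ell,j},m}, r_{k_{\ell,s},m})^T$. Expanding along this column and bounding the three $2\times 2$ minors via (i)--(ii) yields
$$1 \;\leq\; c_3\,|\Lambda|\,b^{\theta(\ell)d^m} \;+\; c_4\,H\,b^{-\nu(\ell)d^m},$$
with $c_3, c_4>0$ depending on $\underline{k}(\ell)$ but not on $m, H$, or the $h_i$. The coefficient of $\Lambda$ is an $a$--$b$ minor of size at most $b^{(E(k_{\ell,j})+E(k_{\ell,s}))d^m}\leq b^{\theta(\ell)d^m}$, and each of the two remainder-involving terms is a product of the form $r\cdot(h_1 b - h_2 a)$ bounded by $c\,H\,b^{(E-V)d^m}\leq c\,H\,b^{-\nu(\ell)d^m}$ by definition of $\nu(\ell)$.

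Given $H$, the idea is to choose $(\ell,m)$ so that the second term above is at most $1/2$ while $\theta(\ell)d^m$ is as small as possible. The hypothesis $0<\nu(1)<\cdots<\nu(L)<d\,\nu(1)$ arranges the numbers $\{\nu(\ell')d^{m'} : 1\leq \ell' \leq L,\ m'\geq 0\}$ into a strictly increasing sequence when sorted lexicographically by $(m',\ell')$. Choose $(\ell,m)$ to be the unique pair for which $\nu(\ell)d^m$ is the smallest value in this sequence exceeding $\log(2c_4 H)/\log b$. Then $c_4 H\,b^{-\nu(\ell)d^m}\leq 1/2$ gives $|\Lambda|\geq (2c_3)^{-1}b^{-\theta(\ell)d^m}$, while the immediate predecessor in the sorted sequence --- $(\ell-1,m)$ when $\ell\geq 2$, or $(L,m-1)$ when $\ell=1$ --- has $\nu$-value below $\log(2c_4 H)/\log b$, forcing $d^m \leq \kappa\,\log H/(\nu(\ell-1)\log b)$ or $d^m\leq \kappa\,d\log H/(\nu(L)\log b)$ respectively, with $\kappa$ close to $1$ once $H\geq H_0$. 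Substituting back, $|\Lambda|\geq CH^{-\theta(\ell)/\nu(\ell-1)}$ or $|\Lambda|\geq CH^{-d\theta(1)/\nu(L)}$; under the convention $\theta(L+1):=d\theta(1)$, both are of the form $CH^{-\mu(\ell')}$ for the appropriate $\ell'\in\{1,\dots,L\}$. Taking the maximum yields $|\Lambda|>CH^{-\mu}$, provided $H_0$ is chosen large enough that the selected $m$ satisfies $m\geq m_0(\underline{k}(\ell))$ for every $\ell$.

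The main subtlety lies in this last step: because $m$ is constrained to integers while $H$ varies continuously, one is forced to overshoot to the \emph{next} available threshold, so it is the predecessor --- not $(\ell,m)$ itself --- that governs the upper bound on $d^m$ in terms of $H$. This index shift is precisely what produces the exponent $\theta(\ell+1)/\nu(\ell)$ rather than the naive $\theta(\ell)/\nu(\ell)$, and the wrap-around from $\ell=L$ (at level $m$) to $\ell=1$ (at level $m+1$) is what necessitates the convention $\theta(L+1):=d\theta(1)$.
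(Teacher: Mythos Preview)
Your proposal is correct and follows essentially the same route as the paper's proof: extract a nonzero integer from a $3\times3$ determinant involving $(h_0,h_1,h_2)$ and two of the three approximation rows, perform the column operation to replace $h_0$ by $\Lambda$ and the $c$-entries by remainders, arrive at the key inequality $1\le C_1|\Lambda|b^{\theta(\ell)d^m}+C_2 H\,b^{-\nu(\ell)d^m}$, and then pick the first pair $(\ell,m)$ in the lexicographic chain $(1,M_0),\dots,(L,M_0),(1,M_0+1),\dots$ for which the second term drops below $1/2$, using the predecessor to bound $b^{\theta(\ell)d^m}$ against a power of $H$. One small point: you state that $c_3,c_4$ depend on $\underline{k}(\ell)$, but since $\ell$ is chosen afterward as a function of $H$, you need these constants uniform in $\ell$ (the paper notes this explicitly); of course this is harmless since $L$ is finite and one can pass to the maximum.
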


\begin{proof}
Let
\[
\Lambda = h_0 + h_1\gamma_1 + h_2\gamma_2.
\]
By the condition (iii) above, for all $\ell = 1,\ldots,L$ there exist $1\leq i < j\leq 3$ such that
\[
D(\underline{k}(\ell),\underline{h}) := \det\left(\begin{array}{ccc} 
  h_1 & h_2& h_0 \\
	a_{k_{\ell,i},m} & b_{k_{\ell,i},m}& c_{k_{\ell,i},m} \\
	a_{k_{\ell,j},m} & b_{k_{\ell,j},m}& c_{k_{\ell,j},m} \\
  \end{array}\right)
=\det\left(\begin{array}{ccc} 
  h_1 & h_2& \Lambda \\
	a_{k_{\ell,i},m} & b_{k_{\ell,i},m}& r_{k_{\ell,i},m} \\
	a_{k_{\ell,j},m} & b_{k_{\ell,j},m}& r_{k_{\ell,j},m} \\
  \end{array}\right)
\neq 0.		
\]
Since $D(\underline{k}(\ell),\underline{h})$ is an integer, we obtain, by (\ref{8}) and (\ref{9}),
\begin{equation}\label{10}
1 \leq 2\left|\Lambda\right|c_1(k_{\ell,i})c_1(k_{\ell,j})b^{(E(k_{\ell,i})+E(k_{\ell,j}))d^m}+2hc_1(k_{\ell,j})c_2(k_{\ell,i})b^{-(V(k_{\ell,i})-E(k_{\ell,j}))d^m}+
\end{equation}
\[
2hc_1(k_{\ell,i})c_2(k_{\ell,j})b^{-(V(k_{\ell,j})-E(k_{\ell,i}))d^m}
\]
with $h = \max \{\left|h_1\right|,\left|h_2\right|\}$. The definitions of $\theta(\ell)$ and $\nu(\ell)$ then give
\begin{equation}\label{11}
1 \leq C_1(K)\left|\Lambda\right|b^{\theta(\ell)d^m} + C_2(K)hb^{-\nu(\ell)d^m}
\end{equation}
for all $m \geq M_0 := \max \{m_0(\underline{k}(1)),\ldots,m_0(\underline{k}(L))\}$, and here $C_1(K)$ and $C_2(K)$ (and also $C_3(K)$ later) are positive constants depending on $K$. Note that $C_1(K)$ and $C_2(K)$ are the same for all $\ell$.

We now choose $H_0$ in such a way that
\[
2C_2(K)H_0 \geq b^{\nu(1)d^{M_0}},
\]
and fix the pair $(\ell,m)$ from the sequence $(1,M_0)$, $\ldots$, $(L,M_0)$,  $(1,M_0+1)$, $\ldots$, $(L,M_0+1)$, $(1,M_0+2)$, $\ldots$ to be the first one satisfying
\[
2C_2(K)H < b^{\nu(\ell)d^m},
\]
where $H = \max \{h,H_0\}$. Then $(\ell,m) \neq (1,M_0)$, and the pair just before it is $(\ell-1,m)$, if $\ell > 1$, and $(L,m-1)$, if $\ell = 1$. The above choice means that
\[
2C_2(K)H \geq b^{\nu(\ell-1)d^m}, \quad \ell > 1,
\]
\[
2C_2(K)H \geq b^{\nu(L)d^{m-1}}, \quad \ell = 1.
\]

In the first case, by (\ref{11}),
\[
\frac{1}{2} < C_1(K)\left|\Lambda\right|b^{\theta(\ell)d^m} = C_1(K)\left|\Lambda\right|(b^{\nu(\ell-1)d^m})^{\theta(\ell)/\nu(\ell-1)} \leq C_3(K)\left|\Lambda\right|H^{\mu}.
\]
In the case $\ell = 1$ we similarly have
\[
\frac{1}{2} < C_1(K)\left|\Lambda\right|b^{\theta(1)d^m} = C_1(K)\left|\Lambda\right|(b^{\nu(L)d^{m-1}})^{d\theta(1)/\nu(L)} \leq C_3(K)\left|\Lambda\right|H^{\mu}.
\]
This proves our lemma.
\end{proof}

\section{General theorem}

We now assume that $F(z), G(z) \in \mathbb{Q}[[z]]$ converge in some disk $D_r$ and satisfy (\ref{1}) and (\ref{2}). Our aim is to apply Lemma \ref{lem:1} to consider the function values $F(a/b)$ and $G(a/b)$ at non-zero rational points $a/b \in D_r$, where $\log \left|a\right| = \lambda \log b$, $0 \leq \lambda < \log(rb)/\log b$. We also assume that
\begin{equation}\label{12}
(P_{11}((a/b)^{d^j})P_{22}((a/b)^{d^j}) - P_{12}((a/b)^{d^j})P_{21}((a/b)^{d^j}))P((a/b)^{d^j}) \neq 0, \ j = 0,1,\ldots .
\end{equation}

The approximation forms we use are obtained from (\ref{4}) at $z = a/b$. The recursions (\ref{e:resursion}) imply, for all $m \geq 1$,
\begin{equation}\label{13}
\deg A_{k,m}(z),\, \deg B_{k,m}(z),\, \deg C_{k,m}(z) \leq \left(\bar{e}(k)+\frac{\tau}{d-1}\right)\cdot d^{m}-\frac{\tau}{d-1}
\end{equation}
where $\bar{e}(k)$ and $\tau$ are non-negative integers satisfying $\bar{e}(k) \leq \bar{d}(k) := \max \{d_1(k),d_2(k),d_3(k)\}$ and $\tau \leq \nu$, the maximum of the degrees of $P_{ij}(z)$ and $P(z)$. Thus the multiplication of (\ref{4}) at $z = a/b$ by 
\[
Q_{k,m} := b^{(\bar{e}(k)+\frac{\tau}{d-1})d^m-\frac{\tau}{d-1}}
\]
leads to linear forms
\[
a_{k,m}F(\frac{a}{b}) + b_{k,m}G(\frac{a}{b}) + c_{k,m} = r_{k,m}, \quad m = 0, 1,\ldots,
\]
where all $a_{k,m}, b_{k,m}$ and $c_{k,m}$ are integers. To be able to apply Lemma 1 with $\gamma_1 = F(a/b), \gamma_2 = G(a/b)$ we need to estimate the coefficients $a_{k,m}$ and $b_{k,m}$ and the remainders $r_{k,m}$. For this we apply the recursions (\ref{e:resursion}).

Let $\widetilde{P}(z)$ denote the polynomial, where the coefficient of $z^j$ is the maximum of the absolute values of the corresponding coefficients in $P_{ij}(z), 1 \leq i,j \leq 2$. Then, for all $m = 1,2,\ldots$, 
\[
\left|A_{k,m}(z)\right| \leq \widetilde{P}(\left|z\right|)(\left|A_{k,m-1}(z^d)\right| + \delta\left|B_{k,m-1}(z^d)\right|),
\]
\[
\left|B_{k,m}(z)\right| \leq \widetilde{P}(\left|z\right|)(\delta\left|A_{k,m-1}(z^d)\right| + \left|B_{k,m-1}(z^d)\right|),
\]
where $\delta = 0$ for degree one functions $F(z)$ and $G(z)$, and $\delta = 1$ otherwise. Applying these inequalities we obtain
\[
\max \{\left|A_{k,m}(z)\right|, \left|B_{k,m}(z)\right|\} \leq (1+\delta)^m\max \{\left|A_k(z^{d^m})\right|, \left|B_k(z^{d^m})\right|\}\prod_{j=0}^{m-1}\widetilde{P}(\left|z\right|^{d^j}).
\]
Therefore, for all $m \geq m_1(k)$,
\[
\max \{\left|a_{k,m}\right|, \left|b_{k,m}\right|\} \leq c_3(k)b^{(\bar{e}(k)+\frac{\tau}{d-1})d^m},
\]
if the condition
\begin{equation}\label{e1}
(1+\delta)\left|\widetilde{P}(0)\right| \leq 1
\end{equation}
holds. Generally, for any given $\delta_1 > 0$,
\begin{equation}\label{e2}
\max \{\left|a_{k,m}\right|, \left|b_{k,m}\right|\} \leq c_3(k)b^{(\bar{e}(k)+\frac{\tau}{d-1}+\delta_1)d^m}
\end{equation}
for all $m \geq m_2(k,\delta_1)$, and under the condition (\ref{e1}) we may choose here $\delta_1 = 0$.

Since
\[
R_{k,m}(z) = R_k(z^{d^m})\prod_{j=0}^{m-1}P(z^{d^j}),
\]
we also have
\[
\left|r_{k,m}\right| \leq c_4(k)\max\{1,\left|P(0)\right|^m\}b^{-((1-\lambda)o(k)-\bar{e}(k)-\frac{\tau}{d-1})d^m}
\]
for all $m \geq m_3(k)$. Thus, for any given $\delta_2 > 0$,
\begin{equation}\label{e3}
\left|r_{k,m}\right| \leq c_4(k)b^{-((1-\lambda)o(k)-\bar{e}(k)-\frac{\tau}{d-1}-\delta_2)d^m}
\end{equation}
for all $m \geq m_4(k,\delta_2)$, and we may use here the value $\delta_2 = 0$, if the condition
\begin{equation}\label{e4}
\left|P(0)\right| \leq 1
\end{equation}
holds.

Thus we have the estimates (\ref{8}) and (\ref{9}) for all $m \geq m_5(k,\delta_1,\delta_2)$, where
\begin{equation}\label{e5}
E(k) = \bar{e}(k)+\frac{\tau}{d-1}+\delta_1,~ V(k) = (1-\lambda)o(k)-\bar{e}(k)-\frac{\tau}{d-1}-\delta_2.
\end{equation}
By using these values with Lemma \ref{lem:1} we get the following theorem, we only need to note that the condition $D(\underline{k},z) \neq 0$ implies $D(\underline{k},(a/b)^{d^m}) \neq 0$ for all $m \geq m_6(\underline{k},a/b)$.

\begin{theorem}\label{thm:5}
Assume that the condition (\ref{12}) holds and $D(\underline{k},z) \neq 0$ for all $\ell = 1,\ldots,L.$ Let $\theta(\ell)$ and $\nu(\ell)$ be defined as in Lemma 1 with $E(k)$ and $V(k)$ given in (\ref{e5}). If $0 < \nu(1) <\cdots< \nu(L) < d\nu(1)$, then there exist positive constants $\lambda_0 = \lambda_0(K,F,G), C = C(K,a/b,F,G)$ and $H_0 = H_0(K,a/b,F,G)$ such that for all $0 \leq \lambda < \lambda_0$ and any integers $h_0, h_1, h_2$, not all zero,
\[
\left|h_0 + h_1F(\frac{a}{b}) + h_2G(\frac{a}{b})\right| > CH^{-\mu}
\]
with $H$ and $\mu$ as in Lemma \ref{lem:1}.
\end{theorem}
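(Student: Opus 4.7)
The plan is to reduce Theorem~\ref{thm:5} to an application of Lemma~\ref{lem:1} with $\gamma_1 = F(a/b)$ and $\gamma_2 = G(a/b)$. Most of the technical work is already done in the build-up to the statement: evaluating the polynomial identity (\ref{4}) at $z = a/b$ and multiplying through by $Q_{k,m} = b^{(\bar{e}(k)+\tau/(d-1))d^m - \tau/(d-1)}$ produces integer linear forms $a_{k,m} F(a/b) + b_{k,m} G(a/b) + c_{k,m} = r_{k,m}$, and the estimates (\ref{e2})--(\ref{e3}) are exactly conditions (i)--(ii) of Lemma~\ref{lem:1} with $E(k), V(k)$ given by (\ref{e5}). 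So what genuinely remains is to verify the determinant condition (iii) at $z = a/b$ and to pin down the auxiliary constant $\lambda_0$.

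For condition (iii), after clearing the positive $Q$-factors it suffices to prove $\Delta(\underline{k}(\ell), m, a/b) \neq 0$ for all sufficiently large $m$. Formula (\ref{5}) expresses this determinant as
\[
\Delta(\underline{k}(\ell), m, a/b) = \Delta(\underline{k}(\ell), 0, (a/b)^{d^m}) \prod_{j=0}^{m-1} \Phi((a/b)^{d^j});
\]
the product is non-zero by hypothesis (\ref{12}), and by (\ref{6}) the remaining factor equals $(a/b)^{o(k_{\ell,1}) d^m} D(\underline{k}(\ell), (a/b)^{d^m})$. Writing $D(\underline{k}(\ell), z) = z^{t_\ell} \hat{D}_\ell(z)$ with $\hat{D}_\ell(0) \neq 0$, and using $a/b \neq 0$ together with $(a/b)^{d^m} \to 0$, one sees that both factors are non-zero once $m$ exceeds a threshold $m_6(\underline{k}(\ell), a/b)$. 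This supplies condition (iii) of Lemma~\ref{lem:1}.

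To conclude, I would set $m_0(k)$ to be the maximum of the thresholds appearing in (\ref{e2}), (\ref{e3}) and the $m_6$ above, and then invoke Lemma~\ref{lem:1} with $\theta(\ell)$ and $\nu(\ell)$ built from (\ref{e5}). The scalar $\lambda_0$ serves two roles: it must keep $a/b \in D_r$ (so that the function values make sense and the geometric decay $(a/b)^{d^m} \to 0$ is in force), and by continuity of $V(k) = (1-\lambda)o(k) - \bar{e}(k) - \tau/(d-1) - \delta_2$ in $\lambda$ it must be small enough that the strict ordering $0 < \nu(1) < \cdots < \nu(L) < d\nu(1)$ is preserved throughout $[0,\lambda_0)$. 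The main delicacy I anticipate is choosing the slack parameters $\delta_1, \delta_2$: when (\ref{e1}) or (\ref{e4}) fails one cannot set them to zero, and the ordering of the $\nu(\ell)$ must survive the resulting perturbation of $E(k)$ and $V(k)$. This is careful parameter tracking rather than any fresh idea, and it completes the proof once Lemma~\ref{lem:1} is applied.
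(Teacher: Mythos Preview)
Your proposal is correct and follows essentially the same route as the paper: the paper's proof is just the one-line remark preceding the theorem that $D(\underline{k},z)\neq 0$ forces $D(\underline{k},(a/b)^{d^m})\neq 0$ for all large $m$, the rest being the already-established estimates (\ref{e2})--(\ref{e3}) fed into Lemma~\ref{lem:1}. Your write-up simply spells this out in more detail, including the role of $\lambda_0$ and the factorisation of $D(\underline{k}(\ell),z)$, which the paper leaves implicit.
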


\section{Proof of Theorems \ref{thm:1}-\ref{thm:4}}
We are ready to prove Theorems \ref{thm:1}-\ref{thm:4}. We start by giving the following formulae which follow from (\ref{e5}):
\begin{equation}
\left\{
\begin{aligned}
\theta(\ell)&=\max_{1\leq i < j\leq 3}\left\{\bar{e}(k_{\ell,i})+\bar{e}(k_{\ell,j})\right\}+\frac{2\tau}{d-1}+2\delta_{1},\\
\nu(\ell)&=\min_{\substack{1\leq i, j\leq 3\\
i\neq j}}\left\{(1-\lambda)o(k_{\ell,i})-\bar{e}(k_{\ell,i})-\bar{e}(k_{\ell,j})\right\}-\frac{2\tau}{d-1}-\delta_{1}-\delta_{2}.
\end{aligned}
\right. \label{16}
\end{equation}
Thus we should choose $\tau$ and $\delta_{i}$ as small as possible while applying Lemma \ref{lem:1}. 

\begin{proof}[Proof of Theorem \ref{thm:1}]
To prove Theorem \ref{thm:1}, we apply Theorem \ref{thm:5} with $F(z)=T(z)$, $G(z)=T^{2}(z)$. Now, by (\ref{Thue}), 
\begin{align*}
(1-z)^{2}F(z^{2})=(1-z)F(z),\quad (1-z)^{2}G(z^{2})= G(z).
\end{align*}
Therefore $r=1$, $\delta=0$, $P(z)=(1-z)^{2}$, $\widetilde{P}(z)=1+z$ and $\widetilde{P}(0)=P(0)=1$ gives $\delta_{1}=\delta_{2}=0$. We shall use $(k, k+1, k-1)$ approximations and we may take $\bar{e}(k)=k+1$, $\tau=0$. Our $\underline{k}(\ell)$ are $(k_{\ell,1}, k_{\ell,1}+1, k_{\ell,1}+2)$ and the choices for $k=k_{\ell,1}$ are $29, 31,34, 43$ and $49$. For all these values $o(k)=3k+2$. Since $\textrm{deg}\Delta(\underline{k}(\ell),z)\leq 3k+3$, we have $D(\underline{k}(\ell),z )=s_{\ell,0}+s_{\ell,1}z$, where
\begin{equation*}
s_{\ell, 0}=\textrm{det}\begin{pmatrix}
A_{k+1}(0) & B_{k+1}(0)\\
A_{k+2}(0) & B_{k+2}(0)
\end{pmatrix} c \neq 0
\end{equation*}
where $c$ is the coefficient of $z^{3k+2}$ in $R_{k}(z)$ (see Appendix).
In fact $s_{\ell,0}$  is nonzero in all of our cases, also in the proofs of Theorems \ref{thm:2}-\ref{thm:4}.
By using (\ref{16}) we get 
\begin{align*}
\theta(\ell)=2k+5, \quad \nu(\ell)=k-2-\lambda(3k+2)
\end{align*}
for all $\lambda < 2/3$. So we have the following table.
\begin{center}
\begin{tabular}{|>{$}c<{$}|>{$}c<{$}|>{$}c<{$}|>{$}c<{$}|>{$}c<{$}|>{$}c<{$}|}
\hline
\ell & 1 & 2 & 3 & 4 & 5 \\
\hline
k & 29 & 31 & 34 & 43 & 49 \\
\hline
\theta(\ell) & 63 & 67 & 73 & 91 & 103 \\
\hline
\nu(\ell) & 27-89\lambda & 29-95\lambda & 32-104\lambda & 41-131\lambda & 47-149\lambda\\
\hline
\end{tabular}
\end{center}
For the condition $0<\nu(1)<\cdots < \nu(5)<2\nu(1)$ we need to assume $\lambda<\lambda_{0}:=7/29\approx 0.241\dots$. 
When $\lambda<\lambda_{0}$, the comparison of $\mu(\ell)$ gives
\[\mu=\max_{\ell}\frac{\theta(\ell+1)}{\nu(\ell)}=\frac{\theta(4)}{\nu(3)}=\frac{91}{32-104\lambda}.\]
This prove Theorem \ref{thm:1}.
\end{proof}

To prove Theorem \ref{thm:2}-\ref{thm:4}, we need to modify the choices of parameters.
\begin{proof}[Proof of Theorem \ref{thm:2}]
Here we apply Theorem \ref{thm:5} with $F(z)=A(z)$, $G(z)=B(z)$, and the use of (\ref{Stern}) gives $r=1$, $\delta=0$, $P(z)=1+z+z^{2}$, $\widetilde{P}(z)=1$ and $\delta_{1}=\delta_{2}=0$. The $(k, k+1, k-1)$ approximations give $\bar{e}(k)=k+1$, $\tau=1$. By choosing $\underline{k}(l)$ as above, where $k=k_{\ell,1}$ are $29$, $31$, $34$, $38$, $43$ and $49$, we get $o(k)=3k+2$ and the determinants $D(\underline{k}(\ell),z )\neq 0$ (see Appendix). Further, \[\theta(\ell)=2k+7, \quad \nu(\ell)=k-4-\lambda(3k+2)\] for all $\lambda<2/3$, and this leads to the following table.
\begin{center}
\begin{tabular}{|>{$}c<{$}|>{$}c<{$}|>{$}c<{$}|>{$}c<{$}|>{$}c<{$}|>{$}c<{$}|>{$}c<{$}|}
\hline
\ell & 1 & 2 & 3 & 4 & 5 & 6\\
\hline
k & 29 & 31 & 34 & 38 & 43 & 49 \\
\hline
\theta(\ell) & 65 & 69 & 75 & 83 & 93 & 105 \\
\hline
\nu(\ell) & 25-89\lambda & 27-95\lambda & 30-104\lambda & 34-116\lambda & 39-131\lambda & 45-149\lambda\\
\hline
\end{tabular}
\end{center}
To satisfy the condition $0<\nu(1)<\cdots < \nu(6)<2\nu(1)$, we need to assume $\lambda<\lambda_{0}:=5/29\approx 0.172\dots$. After the comparison of $\mu(\ell)=\theta(\ell +1)/\nu(\ell)$ we see that 
\begin{equation*}
\mu=\max_{1\leq \ell \leq 6}\mu(\ell)=\left\{
\begin{aligned}
\mu(6)&=\frac{130}{45-149\lambda}, && \textrm{ if }\lambda<\frac{145}{1289},\\
\mu(1)&=\frac{69}{25-89\lambda}, && \textrm{ if }\frac{145}{1289}\leq \lambda <\frac{5}{29}.\\
\end{aligned}\right. 
\end{equation*}
This proves Theorem \ref{thm:2}.
\end{proof}
\begin{remark}
We note that here all determinants $D(\underline{k}(\ell),z)\neq 0$, $1\leq k\leq 50$. In all other theorems most of these determinants equal zero.
\end{remark}

\begin{proof}[Proof of Theorem \ref{thm:3}]
In this case we apply Theorem \ref{thm:5} with $d=3$, $F(z)=G_{3}(z)$ and $G(z)=F_{3}(z)$. Then (\ref{G3F3}) implies $r=1$, $\delta=0$, $P(z)=1-z^{2}$, $\widetilde{P}(z)=1+z^{2}$ and $\delta_{1}=\delta_{2}=0$. The use of $(k, k, k)$ approximations give $\bar{e}(k)=k$, $\tau= 2$. If $\underline{k}(\ell)$ is the same as above and $k=k_{\ell,1}$ are $19, 26$ and $39$, then $o(k)=3k+2$ and $D(\underline{k}(\ell),z )\neq 0$ (see Appendix). By (\ref{16}), if $\lambda<2/3$, we get \[\theta(\ell)=2k+5,\quad \nu(\ell)=k-2-\lambda(3k+2).\] Now the table is the following.
\begin{center}
\begin{tabular}{|>{$}c<{$}|>{$}c<{$}|>{$}c<{$}|>{$}c<{$}|}
\hline
\ell & 1 & 2 & 3  \\
\hline
k & 19 & 26 & 39\\
\hline
\theta(\ell) & 43  & 57  & 83  \\
\hline
\nu(\ell) & 17-59\lambda  & 24-80\lambda  & 37-119\lambda   \\
\hline
\end{tabular}
\end{center}
The condition $0<\nu(1)<\nu(2) < \nu(3)<3\nu(1)$ holds, if  $\lambda<\lambda_{0}:=7/29\approx 0.241\dots$. Similarly to the above proofs we now get Theorem \ref{thm:3}.
\end{proof}

\begin{proof}[Proof of Theorem \ref{thm:rudin}]
Here we may use Theorem \ref{thm:5} with $F(z) = R(z)$ and $G(z) = R(-z)$. By (\ref{e:rudin}), we have 
\begin{align*}
2zF(z^{2})  = zF(z)+ zG(z),\quad 2zG(z^{2})  = F(z) + G(z).
\end{align*}
Therefore, we can choose $r=1, \delta = 1, P(z) = 2z$, and $\widetilde{P}(z)=1+z.$ Since $P(0) = 0$, (23) holds and we may take $\delta_{2}=0$. We use the $(k,k,k)$ approximations and we can take $\bar{e}(k)=k$ and $\tau=1$. We also choose $\underline{k}(\ell)=(k_{\ell,1}, k_{\ell,1}+1, k_{\ell,1}+2)$ where $k=k_{\ell,1}$ are $17$, $21$ and $26$. Then we get $o(k)=3k+2$ and the determinants $D(\underline{k}(\ell),z)\neq 0$ (see Appendix). Moreover, 
\[\theta(\ell)=2k+5+2\delta_1,\quad \nu(\ell)=k-2-\delta_1-\lambda(3k+2)\]
for all $\lambda < 2/3$. This gives the following table. 
\begin{center}
\begin{tabular}{|>{$}c<{$}|>{$}c<{$}|>{$}c<{$}|>{$}c<{$}|}
\hline
\ell & 1 & 2 & 3  \\
\hline
k & 17 & 21 & 26\\
\hline
\theta(\ell) & 39+2\delta_1  & 47+2\delta_1  & 57+2\delta_1  \\
\hline
\nu(\ell) & 15-\delta_1-53\lambda  & 19-\delta_1-65\lambda  & 24-\delta_1-80\lambda   \\
\hline
\end{tabular}
\end{center}
The condition $0<\nu(1)<\nu(2) < \nu(3)<2\nu(1)$ holds, if  $\lambda<3/13\approx 0.230\dots$ and $\delta_1$ is sufficiently small. If $\lambda < 21/187 \approx 0.112\dots$ and $\delta_{1}$ is small enough, then
\[
\mu = \frac{\theta(4)}{\nu(3)} = \frac{78+4\delta_1}{24-\delta_1-80\lambda}.
\]
If $21/187 \leq \lambda	< 3/13$, then 
\[
\mu = \frac{\theta(2)}{\nu(1)} = \frac{47+2\delta_1}{15-\delta_1-53\lambda}.
\]
This proves Theorem \ref{thm:rudin}, since we may choose $\delta_1$ arbitrarily small.
\end{proof}

\begin{proof}[Proof of Theorem \ref{thm:4}]
We now apply Theorem \ref{thm:5} with $F(z)=S(z)$ and $G(z)=S(z^{4})$. The use of (\ref{degree2}) gives $d=4$, $r=1$ and
\begin{align*}
F(z^{4})  = G(z),\quad G(z^{4}) = -zF(z)+(1+z+z^{2})G(z).
\end{align*}
Since $P(0)=1$, we may choose $\delta_{2}=0$ in (\ref{e3}).  We shall use of  $(k, k-1, k)$ approximations and we may take $\bar{e}(k)=k$, $\tau=1$. Again our $\underline{k}(\ell)=(k_{\ell,1}, k_{\ell,1}+1, k_{\ell,1}+2)$ and the choices for $k=k_{\ell,1}$ are $10$ and $26$. For both of these values $o(k)=3k+1$ and the determinants $D(\underline{k}(\ell),z )\neq 0$ (see Appendix). By using (\ref{16}), if $\lambda<2/3$, we get, 
\[\theta(\ell)=2k+3+\frac{2}{3}+2\delta_{1},\quad \nu(\ell)=k-1-\frac{2}{3}-\delta_{1}-\lambda(3k+1).\] 
Thus we have the following table.
\begin{center}



\begin{thebibliography}{10}

\bibitem{Adam2010}
Boris Adamczewski.
\newblock {Non-converging continued fractions related to the Stern diatomic
  sequence}.
\newblock {\em Acta Arith.}, 142: 67--78, 2010.

\bibitem{AR2009}
Boris Adamczewski and Tanguy Rivoal.
\newblock Irrationality measures for some automatic real numbers.
\newblock {\em Math. Proc. Cambridge Phil. Soc.}, 147 (11): 659--678, 2009.


\bibitem{BCZ}
Richard P. {Brent}, Michael {Coons}, and Wadim {Zudilin}.
\newblock {Algebraic independence of {Mahler} functions via radial asymptotics
}.
\newblock {\em {Int. Math. Res. Notices}}, 2016 (2): 571--603, 2016.

\bibitem{Bugeaud2011}
Yann Bugeaud.
\newblock {On the rational approximation to the Thue{-}Morse{-}Mahler numbers}.
\newblock {\em {Ann. Inst. Fourier (Grenoble)}}, 61 (5): 2065--2076, 2011.

\bibitem{BHWY2015}
Yann {Bugeaud}, Guo-Niu {Han}, Zhi-Ying {Wen}, and Jia-Yan {Yao}.
\newblock {Hankel determinants, Pad\'{e} approximations, and irrationality
  exponents}.
\newblock {\em Int. Math. Res. Notices},  2016 (5): 1467-1496, 2016.

\bibitem{BV2015}
Peter {Bundschuh} and Keijo {V\"{a}\"{a}n\"{a}nen}.
\newblock {Transcendence results on the generating Lambert series of the powers
  of a fixed integer}.
\newblock {\em Hardy-Ramanujan Journal}, 38: 36-44, 2015.

\bibitem{BV2013}
Peter {Bundschuh} and Keijo {V\"{a}\"{a}n\"{a}nen}.
\newblock Algebraic independence of the generating functions of {Stern's}
  sequence and of its twist.
  \newblock {\em J. Th\'{e}or. Nombres Bordeaux}, 25: 43--57,
  2013.

\bibitem{BV2014}
Peter {Bundschuh} and Keijo {V\"{a}\"{a}n\"{a}nen}.
\newblock Transcendence results on the generating functions of the
  characteristic functions of certain self-generating sets.
\newblock {\em Acta Arith.}, 162: 273--288, 2014.

\bibitem{Coons2013}
Michael Coons.
\newblock On the rational approximation of the sum of the reciprocals of the
  {Fermat} numbers.
\newblock {\em Ramanujan J.}, 30 (1): 39--65, 2013.

\bibitem{DS2009}
Karl {Dilcher} and Kenneth~B. {Stolarsky}.
\newblock {Stern polynomials and double-limit continued fractions}.
\newblock {\em Acta Arith.}, 140: 119--134, 2009.

\bibitem{GWW2014}
Ying-Jun {Guo}, Zhi-Xiong {Wen}, and Wen {Wu}.
\newblock On the irrationality exponent of the regular paperfolding numbers.
\newblock {\em Linear Algebra Appl.}, 446: 237 -- 264, 2014.



\bibitem{Nishioka}
K. Nishioka, 
\newblock Mahler Functions and Transcendence,
\newblock {\em Lecture Notes in Math.,} vol. 1631, Springer, 1996.

\bibitem{Keijo2015}
Keijo V\"{a}\"{a}n\"{a}nen.
\newblock On rational approximations of certain {Mahler} functions with a
  connection to the {Thue}{-}{Morse} sequence.
\newblock {\em Int. J. Number Theory}, 11 (2): 487--493, 2015.

\bibitem{VM2015}
Keijo {V\"{a}\"{a}n\"{a}nen} and Wen {Wu}.
\newblock {On simultaneous approximation of the values of certain Mahler functions}.
\newblock {\em arXiv:1505.00931}.


\bibitem{WW2014}
Zhi-Xiong {Wen} and Wen {Wu}.
\newblock {Hankel determinants of the Cantor sequence}.
\newblock {\em Scientia Sinica Mathematica (Chinese)}, 44 (10): 1059--1072, 2014.

\end{thebibliography}
\end{document}